\newtheorem{theorem}{Theorem}[section]
\theoremstyle{definition}
\theoremstyle{remark}
\numberwithin{equation}{section}
\newcommand{\spt}[1]{\mbox{\normalfont spt}\Parans{#1}}
\newcommand{\sptBar}[2]{\overline{\mbox{\normalfont spt}}_{#1}\Parans{#2}}
\newcommand{\Parans}[1]{\left(#1\right)}
\newcommand{\PieceTwo}[4]
{
	\left\{
   	\begin{array}{ll}
      	#1 & #3 \\
       	#2 & #4
     	\end{array}
	\right.
}
\newcommand{\aqprod}[3]{\Parans{#1;#2}_{#3}}
\newcommand{\SB}{{\mbox{\rm S}}}
\author{CHRIS JENNINGS-SHAFFER}
\address{Department of Mathematics, University of Florida\\
Gainesville, Florida 32611, USA
\endgraf cjenningsshaffer@ufl.edu}
\keywords{Number theory, partitions, overpartitions, smallest parts function, congruences,
ranks, cranks, Bailey pairs, Bailey's Lemma}
\subjclass[2010]{Primary 11P81, 11P83, 05A17}
\title{Another Proof of Two Modulo 3 Congruences and Another SPT Crank for 
the Number of Smallest Parts in Overpartitions with even smallest part}
\begin{document}

\allowdisplaybreaks

\begin{abstract}
By considering the $M_2$-rank of an overpartition as well as a residual crank,
we give another combinatorial refinement of the congruences
$\sptBar{2}{3n}\equiv \sptBar{2}{3n+1}\equiv 0\pmod{3}$. Here $\sptBar{2}{n}$ is the total number
of occurrences of the smallest parts among the overpartitions of $n$ where the
smallest part is even and not overlined.
Our proof depends on Bailey's Lemma
and the rank difference formulas of Lovejoy and Osburn for the 
$M_2$-rank of an overpartition. This congruence, along with a modulo $5$
congruence, has previously been refined using the rank of an overpartition.
\end{abstract}

\maketitle

\section{Introduction and statement of Results}
\allowdisplaybreaks

We recall a partition of a positive integer $n$ is a non-increasing sequence
of positive integers that sum to $n$. As an example, the partitions of $4$
are $4$, $3+1$, $2+2$, $2+1+1$, and $1+1+1+1$. Similar to this is the idea
of an overpartition. An overpartition of $n$ is a partition of $n$ where the
first occurrence of a part may (or may not) be overlined. For example, the 
overpartitions of $4$ are $4$, $\overline{4}$, $3+1$, $3+\overline{1}$,
$\overline{3}+1$, $\overline{3}+\overline{1}$, $2+2$, 
$\overline{2}+2$, $2+1+1$, $2+\overline{1}+1$, $\overline{2}+1+1$,
$\overline{2}+\overline{1}+1$, $1+1+1+1$, and $\overline{1}+1+1+1$.

We have a weighted count on partitions, and overpartitions, given by counting a
partition by the number of times the smallest occurs. We use the convention  of
not counting overpartitions where the smallest part is overlined. We let
$\spt{n}$ denote the total number of occurrences of the smallest parts among 
the partitions of $n$. We let $\sptBar{}{n}$ denote the total number of 
occurrences of the smallest parts among the overpartitions of $n$ without smallest
part overlined. The function $\spt{n}$ was introduced by Andrews in 
\cite{Andrews} and the function $\sptBar{}{n}$ was introduced by Bringmann,
Lovejoy, and Osburn in \cite{BLO2}. Two restrictions of $\sptBar{}{n}$
are $\sptBar{1}{n}$ and $\sptBar{2}{n}$, where restrict to overpartitions
where the smallest part is odd and even respectively. We see
$\spt{4}=10$, $\sptBar{}{4}=13$, $\sptBar{1}{4}=10$, and $\sptBar{2}{4}=3$.

Similar to the work of Andrews, Garvan, and Liang in \cite{AGL} for
$\spt{n}$, in \cite{GarvanJennings} Garvan and the author gave combinatorial
refinements of congruences satisfied by $\sptBar{}{n}$, $\sptBar{1}{n}$, and
$\sptBar{2}{n}$. The idea is to introduce an extra variable into the generating
function of each spt-function to get a crank type statistic. This statistic can
then be shown in certain cases to equally split up the numbers $\sptBar{}{n}$, 
$\sptBar{1}{n}$, and $\sptBar{2}{n}$ based on the residue class of the
statistic. We explain this in more detail shortly.

For $\sptBar{2}{n}$ we have the congruences
\begin{align}\label{EqTheMod3Congruences1}
	\sptBar{2}{3n} \equiv 0 \pmod{3},
	\\\label{EqTheMod3Congruences2}
	\sptBar{2}{3n+1} \equiv 0 \pmod{3},
	\\\label{EqTheMod5Congruences}
	\sptBar{2}{5n+3} \equiv 0 \pmod{5}
.
\end{align}
In this paper we give another proof of the modulo $3$ congruences.

To start, by summing according to the smallest part, we find a generating
function for $\sptBar{2}{n}$ to be given by
\begin{align}
	\sum_{n=1}^\infty \sptBar{2}{n}q^n
	&=
	\sum_{n=1}^\infty \frac{q^{2n} \aqprod{-q^{2n+1}}{q}{\infty}}
	{(1-q^{2n})^2\aqprod{q^{2n+1}}{q}{\infty}}.
\end{align}
Here we use the standard product notation,
\begin{align}
	\aqprod{a}{q}{n} &= \prod_{j=0}^{n-1}(1-aq^j)
	,\\
	\aqprod{a_1,a_2,\dots,a_k}{q}{n}
	&=
	\aqprod{a_1}{q}{n}\aqprod{a_2}{q}{n}\dots\aqprod{a_k}{q}{n}
	,\\
	\aqprod{a}{q}{\infty} &= \prod_{j=0}^{\infty}(1-aq^j)
	,\\
	\aqprod{a_1,a_2,\dots,a_k}{q}{\infty}
	&=
	\aqprod{a_1}{q}{\infty}\aqprod{a_2}{q}{\infty}\dots\aqprod{a_k}{q}{\infty}
.
\end{align}

In \cite{GarvanJennings} we considered the two variable generalization given
by 
\begin{align}
	\overline{\mbox{S}}_2(z,q)
	&=
	\sum_{n=1}^\infty
		\frac{q^{2n}\aqprod{-q^{2n+1}}{q}{\infty}\aqprod{q^{2n+1}}{q}{\infty}}
			{\aqprod{zq^{2n}}{q}{\infty}\aqprod{z^{-1}q^{2n}}{q}{\infty}}
.
\end{align}
We note setting $z=1$ gives the generating function for $\sptBar{2}{n}$.
It turns out $\overline{\mbox{S}}_2(z,q)$ can be expressed in terms of the Dyson rank of an
overpartition and a residual crank from \cite{BLO2}. In \cite{LO1}
Lovejoy and Osburn determined difference formulas for the Dyson rank
of an overpartition, these formulas are essential to the proofs in
\cite{GarvanJennings}. In the same paper, we also used the difference formulas
for the $M_2$-rank of a partition without repeated odd parts, these formulas
were also determined by Lovejoy and Osburn in \cite{LO2}. Lovejoy
and Osburn also found difference formulas for the $M_2$-rank
of an overpartition \cite{LO3}, but we did not use these formulas in 
\cite{GarvanJennings}.

In \cite{CJS1} the
author gave higher order generalizations of $\sptBar{}{n}$ and
$\sptBar{2}{n}$ and noted that one could use the $M_2$-rank and another
residual crank from \cite{BLO2}, as used in that paper in working with $\sptBar{2}{n}$,
to explain the modulo $3$ congruences for $\sptBar{2}{n}$. Here we give the proof.
In this paper we instead use
\begin{align}
	\SB(z,q) &= \sum_{n=1}^\infty
	\frac{q^{2n}\aqprod{-q^{2n+1}}{q}{\infty}\aqprod{q^{2n+1}}{q}{\infty}}
		{\aqprod{zq^{2n},z^{-1}q^{2n}}{q^2}{\infty}\aqprod{q^{2n+1}}{q^2}{\infty}^2}
	\\
	&=
	\sum_{n=1}^\infty \sum_{m=-\infty}^\infty N_{\SB}(m,n)z^mq^n
.
\end{align}
Again setting $z=1$ gives the generating function for $\sptBar{2}{n}$. This is
not the same series $S(z,q)$ used in \cite{GarvanJennings}, however we do not 
want to overly complicate matters with additional notation.

For a positive integer $t$ we let
\begin{align}
	N_{\SB}(k,t,n) = \sum_{m\equiv k \pmod{t}}N_{\SB}(m,n).	
\end{align}
We then have
\begin{align}
	\sptBar{2}{n} &= \sum_{m=-\infty}^\infty N_{\SB}(m,n)
	= \sum_{k=0}^{t-1} N_{\SB}(k,t,n)
.
\end{align}
Additionally we see if $\zeta$ is a $t^{th}$ root of unity, then
\begin{align}
	\SB(\zeta,q)
	&=
	\sum_{n=1}^\infty \left(\sum_{k=0}^{t-1} N_{\SB}(k,t,n)\zeta^k\right)q^n
.
\end{align}
We consider when $\zeta=\zeta_3$ is a primitive third root of unity. Here the minimal
polynomial for $\zeta_3$ is $x^2+x+1$. If 
$N_{\SB}(0,t,N)+N_{\SB}(1,t,N)\zeta_3+N_{\SB}(2,t,N)\zeta_3^2=0$ then we must in fact have 
$N_{\SB}(0,t,N)=N_{\SB}(1,t,N)=N_{\SB}(2,t,N)$. That is to say, if
the coefficient of $q^N$ in $\SB(\zeta_3,q)$ is zero, then 
$\sptBar{2}{N} = 3\cdot N_{\SB}(0,t,N)$ and so 
$\sptBar{2}{N} \equiv 0 \pmod{3}$.

Our proof of $\sptBar{2}{3n}\equiv\sptBar{2}{3n+1}\equiv 0$ is to find the $3$-dissection of 
$\SB(\zeta_3,q)$ with the $q^{3n}$ and $q^{3n+1}$ terms being all zero.
This is the same idea that was used in \cite{GarvanJennings}, we are just using
$\SB(z,q)$ rather than $\overline{\mbox{S}}_2(z,q)$.

\begin{theorem}\label{SptBarCrankDissection}
\begin{align}
	\SB(\zeta_3,q)
	&=  A_0(q^3) + qA_1(q^3) + q^2A_2(q^3)
,
\end{align}
where
\begin{align}
	A_0(q) &= 0,	
	\\
	A_1(q) &= 0,	
	\\
	A_2(q) 
	&= 
	\frac{\aqprod{q^6}{q^6}{\infty}^4}
		{\aqprod{q^2}{q^2}{\infty}\aqprod{q^3}{q^3}{\infty}^2}	
	+\frac{2q\aqprod{-q^3}{q^3}{\infty}}{\aqprod{q^3}{q^3}{\infty}}
	\sum_{n=-\infty}^\infty \frac{(-1)^n q^{3n^2+6n}}{1-q^{6n+2}}
.	
\end{align}
\end{theorem}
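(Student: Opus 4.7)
The plan is to follow the same template as in \cite{GarvanJennings}, but with the $M_2$-rank of an overpartition in place of the Dyson rank. First, I would apply Bailey's Lemma to the defining sum for $\SB(z,q)$. The factor $\aqprod{-q^{2n+1}}{q}{\infty}\aqprod{q^{2n+1}}{q}{\infty}=\aqprod{q^{4n+2}}{q^2}{\infty}$ cleans up the numerator, and the $q^2$-shifted Pochhammers in the denominator suggest choosing a Bailey pair relative to $q^2$. Applying Bailey's Lemma (or a limiting form of it) should rewrite $\SB(z,q)$ as a linear combination of the two-variable generating function for the $M_2$-rank of an overpartition at parameter $z$ and a residual crank generating function from \cite{BLO2}, plus an explicit modular/eta-quotient correction. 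This is the analogue of the rank--crank decomposition of $\overline{\mbox{S}}_2(z,q)$ established in \cite{GarvanJennings}.

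Next I would specialize $z=\zeta_3$. On the rank side, I would invoke the Lovejoy--Osburn rank difference formulas for the $M_2$-rank of an overpartition from \cite{LO3}, which yield explicit 3-dissections of the relevant generating function into $q^{3n}$, $q^{3n+1}$, and $q^{3n+2}$ components expressed in terms of infinite products and Lambert-type series. On the crank side, the residual crank generating function at $\zeta_3$ admits a standard 3-dissection obtainable from product expansions and Jacobi's triple product identity. Collecting the two dissections gives a 3-dissection for $\SB(\zeta_3,q)$.

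The final step is to show that after combining these pieces the coefficients of $q^{3n}$ and $q^{3n+1}$ cancel identically, so that $A_0(q)=A_1(q)=0$, and to simplify the remaining $q^{3n+2}$ piece to the stated form
\[
A_2(q)=\frac{\aqprod{q^6}{q^6}{\infty}^4}{\aqprod{q^2}{q^2}{\infty}\aqprod{q^3}{q^3}{\infty}^2}
+\frac{2q\aqprod{-q^3}{q^3}{\infty}}{\aqprod{q^3}{q^3}{\infty}}\sum_{n=-\infty}^\infty\frac{(-1)^n q^{3n^2+6n}}{1-q^{6n+2}}.
\]
Once Theorem \ref{SptBarCrankDissection} is established, the congruences \eqref{EqTheMod3Congruences1} and \eqref{EqTheMod3Congruences2} follow by the minimal polynomial argument described in the introduction.

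I expect the main obstacle to be the Bailey's Lemma step: identifying the correct Bailey pair so that the resulting expression aligns cleanly with the $M_2$-rank generating function used by Lovejoy and Osburn, rather than with some close variant. Once that matching is secured, the dissection reduces to routine (though lengthy) theta-function and Lambert series manipulations, and the delicate cancellation that forces $A_0=A_1=0$ becomes essentially automatic from the known rank difference formulas. A secondary challenge is the final simplification of $A_2(q)$, where careful bookkeeping of products of $\aqprod{q^k}{q^k}{\infty}$ is required to recognize the compact eta-quotient form stated in the theorem.
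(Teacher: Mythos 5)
Your proposal follows essentially the same route as the paper: Bailey's Lemma with a Bailey pair relative to $(1,q^2)$ expresses $\SB(z,q)$ as $\frac{1}{(1-z)(1-z^{-1})}$ times the difference of the $M_2$-rank and residual crank generating functions (Theorem \ref{SptBar2RankCrank}), after which the Lovejoy--Osburn rank difference formulas and a Jacobi triple product dissection of the crank yield the cancellation $A_0=A_1=0$ and the stated $A_2$. The only cosmetic difference is that the ``correction term'' you anticipate is precisely the crank generating function itself, with no further eta-quotient remainder.
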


We prove these formulas by relating $\SB(z,q)$ to a certain rank and crank
and using dissections of these related functions.
We recall the $M_2$-rank of an overpartition $\pi$ is given by
\begin{align}
	M_2\mbox{-rank} = \left\lceil\frac{l(\pi)}{2} \right\rceil - \#(\pi)
		+\#(\pi_o) - \chi(\pi)
,
\end{align}
where $l(\pi)$ is the largest part of $\pi$, $\#(\pi)$ is the number of
parts of $\pi$, $\#(\pi_o)$ is the number of non-overlined odd parts,
and $\chi(\pi)=1$ if the largest part of $\pi$ is odd and non-overlined 
and otherwise  $\chi(\pi)=0$. The $M_2$-rank was introduced by 
Lovejoy in \cite{LO3}. We let $\overline{N2}(m,n)$ denote the number of 
overpartitions of $n$ with $M_2$-rank $m$. Lovejoy found
the generating function for $\overline{N2}$ is given by
\begin{align}\label{RankDef}
	\sum_{n=0}^\infty\sum_{m=-\infty}^\infty \overline{N2}(m,n)z^mq^n
	&= 
	\sum_{n=0}^\infty \frac{\aqprod{-1}{q}{2n}q^n}
			{\aqprod{zq^2,z^{-1}q^2}{q^2}{n}}
	=
	\frac{\aqprod{-q}{q}{\infty}}{\aqprod{q}{q}{\infty}}
		\Parans{1+
			2\sum_{n=1}^\infty\frac{(1-z)(1-z^{-1})(-1)^nq^{n^2+2n}}
				{(1-zq^{2n})(1-z^{-1}q^{2n})}
		}
.
\end{align}

We also use a residual crank first defined \cite{BLO2}. We define
\begin{align}\label{CrankDef}
	\sum_{n=0}^\infty\sum_{m=-\infty}^\infty \overline{M2}(m,n)z^mq^n
	&= \frac{\aqprod{-q}{q}{\infty}\aqprod{q^2}{q^2}{\infty}}
		{\aqprod{q}{q^2}{\infty}\aqprod{zq^2}{q^2}{\infty}\aqprod{z^{-1}q^2}{q^2}{\infty}}.
\end{align}
The interpretation of this is as follows.
For an overpartition $\pi$ of $n$ we take 
the crank of the partition $\frac{\pi_e}{2}$ obtained by taking the 
subpartition $\pi_e$, of the even non-overlined parts of 
$\pi$, and halving each part of $\pi_e$.
Then $\overline{M2}(m,n)$ is the number of overpartitions $\pi$ of $n$  
and such that the partition $\frac{\pi_e}{2}$ has crank $m$.
However this interpretation fails when considering overpartitions
whose only even non-overlined parts are a single two, as the
corresponding interpretation of 
$\frac{\aqprod{q}{q}{\infty}}{\aqprod{zq,q/z}{q}{\infty}}$
as the generating function of the crank for ordinary partitions fails for
the partition of $1$.

\begin{theorem}\label{SptBar2RankCrank}
\begin{align}
	\SB(z,q)
	&=
	\frac{1}{(1-z)(1-z^{-1})}  
	\sum_{n=0}^\infty\sum_{m=-\infty}^\infty 
		\Parans{\overline{N2}(m,n)-\overline{M2}(m,n)}z^mq^n
.
\end{align}
\end{theorem}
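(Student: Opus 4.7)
The plan is to prove the identity by direct $q$-series manipulation. Multiplying both sides by $(1-z)(1-z^{-1})$, the statement reduces to
\begin{align*}
R(z,q) - C(z,q) = (1-z)(1-z^{-1})\, \SB(z,q),
\end{align*}
where $R(z,q)$ denotes the $M_2$-rank generating function on the right of \eqref{RankDef} and $C(z,q)$ the residual crank generating function of \eqref{CrankDef}. Both sides vanish to second order at $z=1$ and are symmetric under $z \leftrightarrow z^{-1}$, so the quotient in the theorem is a well-defined power series.

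First I would simplify $\SB(z,q)$ using the factorization $\aqprod{-q^{2n+1}}{q}{\infty}\aqprod{q^{2n+1}}{q}{\infty} = \aqprod{q^{4n+2}}{q^2}{\infty}$, and rewrite $C(z,q)$ via $\aqprod{q}{q}{\infty} = \aqprod{q}{q^2}{\infty}\aqprod{q^2}{q^2}{\infty}$ as
\begin{align*}
C(z,q) = \frac{\aqprod{-q}{q}{\infty}\aqprod{q^2}{q^2}{\infty}^2}{\aqprod{q}{q}{\infty}\aqprod{zq^2,z^{-1}q^2}{q^2}{\infty}}.
\end{align*}
Expanding $R(z,q)$ via the Hecke-type formula in \eqref{RankDef} splits it as the ``constant'' piece $\frac{\aqprod{-q}{q}{\infty}}{\aqprod{q}{q}{\infty}}$ plus a sum whose every term already carries an explicit factor of $(1-z)(1-z^{-1})$. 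It remains to extract the same factor from
\begin{align*}
\frac{\aqprod{-q}{q}{\infty}}{\aqprod{q}{q}{\infty}} - C(z,q)
= \frac{\aqprod{-q}{q}{\infty}}{\aqprod{q}{q}{\infty}}\cdot\frac{\aqprod{zq^2,z^{-1}q^2}{q^2}{\infty} - \aqprod{q^2}{q^2}{\infty}^2}{\aqprod{zq^2,z^{-1}q^2}{q^2}{\infty}}.
\end{align*}

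For this I would apply the Jacobi triple product in base $q^2$ to rewrite $(1-z)\aqprod{zq^2,z^{-1}q^2,q^2}{q^2}{\infty}$ as a bilateral theta series, then pair summands $n\leftrightarrow 1-n$ (and use the symmetry $z\leftrightarrow z^{-1}$) to make both factors $(1-z)$ and $(1-z^{-1})$ manifest in the numerator. After dividing out and combining with the Hecke sum already carrying $(1-z)(1-z^{-1})$ from $R(z,q)$, one is left with a single explicit $q$-series which must be matched against the definition of $\SB(z,q)$.

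The main obstacle will be the final term-by-term matching: the two Hecke-type sums (one inherited from \eqref{RankDef}, one produced by the triple-product expansion of $\frac{\aqprod{-q}{q}{\infty}}{\aqprod{q}{q}{\infty}}-C(z,q)$) have to collapse, via partial fractions or a Bailey-pair manipulation, to the single outer index $n$ appearing in $\SB(z,q)$. I expect this is where the Bailey-pair framework invoked elsewhere in the paper (in the spirit of \cite{LO3}, which produced the Hecke formula for $\overline{N2}$ in the first place) enters, playing the same bridging role it plays in the analogous identity relating $\overline{\mbox{S}}_2(z,q)$ to the Dyson rank and the other residual crank in \cite{GarvanJennings}.
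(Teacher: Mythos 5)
Your reductions are sound as far as they go, but the proposal stops exactly where the theorem lives, so there is a genuine gap. The steps you do carry out are the routine ones: splitting $R(z,q)$ into its constant term plus a sum already carrying $(1-z)(1-z^{-1})$, and showing $\frac{\aqprod{-q}{q}{\infty}}{\aqprod{q}{q}{\infty}}-C(z,q)$ is divisible by $(1-z)(1-z^{-1})$ (your triple-product pairing does work: comparing $\sum(-1)^{n}q^{n^2-n}\sum_{j}z^{j}$ with Jacobi's $\sum(-1)^{n}(2n-1)q^{n^2-n}$ exhibits the factor). But after dividing out, what remains to be proved is
\begin{align*}
&\sum_{n=1}^\infty \frac{q^{2n}\aqprod{q^{4n+2}}{q^2}{\infty}}{\aqprod{zq^{2n},z^{-1}q^{2n}}{q^2}{\infty}\aqprod{q^{2n+1}}{q^2}{\infty}^2}
+\frac{\aqprod{q^2}{q^2}{\infty}}{\aqprod{z,z^{-1}}{q^2}{\infty}\aqprod{q}{q^2}{\infty}^2}
\\&\qquad=\frac{\aqprod{-q}{q}{\infty}}{\aqprod{q}{q}{\infty}}\Parans{\frac{1}{(1-z)(1-z^{-1})}+2\sum_{n=1}^\infty\frac{(-1)^nq^{n^2+2n}}{(1-zq^{2n})(1-z^{-1}q^{2n})}},
\end{align*}
and ``partial fractions or a Bailey-pair manipulation'' is not an argument for this: it is a restatement of the entire theorem. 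Nothing in your plan makes the two theta/Hecke expansions on the right collapse to the single-index sum on the left, and you explicitly flag this matching as the step you do not know how to do.

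The paper closes this in one stroke and in the opposite direction. The summand of $\SB(z,q)$ is recognized as $\frac{\aqprod{q^2}{q^2}{\infty}}{\aqprod{z,z^{-1}}{q^2}{\infty}\aqprod{q}{q^2}{\infty}^2}\,q^{2n}\aqprod{z,z^{-1}}{q^2}{n}\beta_n$ for the specific Bailey pair relative to $(1,q^2)$ with $\beta_n=\aqprod{q}{q^2}{n}^2/\aqprod{q^2}{q^2}{2n}$, $\alpha_0=1$ and $\alpha_n=2(-1)^nq^{n^2}$ for $n\ge1$; adding and subtracting the $n=0$ term and applying the limiting case of Bailey's Lemma with $\rho_1=z$, $\rho_2=z^{-1}$ produces the rank series of (\ref{RankDef}) minus the crank series of (\ref{CrankDef}) directly, with no need to extract $(1-z)(1-z^{-1})$ from anything. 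The missing ingredient in your route is precisely the identification of this Bailey pair (or an independent proof of the displayed identity); until you supply it, the ``final term-by-term matching'' you defer is not a detail but the theorem itself.
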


Using the rank difference formulas derived by Lovejoy and Osburn in 
\cite{LO3}, we have
\begin{theorem}\label{DissectionForRank}
\begin{align}
	\sum_{n=0}^\infty\sum_{m=-\infty}^\infty
	\overline{N2}(m,n)\zeta_3^mq^n
	&= \overline{N2}_{0}(q^3) 
		+ q\overline{N2}_{1}(q^3) 
		+ q^2\overline{N2}_{2}(q^3)
,
\end{align}
where
\begin{align}
	\overline{N2}_{0}(q)
	&=
	\frac{\aqprod{-q}{q}{\infty}\aqprod{q^3}{q^3}{\infty}^2}
		{\aqprod{q}{q}{\infty}\aqprod{-q^3}{q^3}{\infty}^2}
	,
	\\
	\overline{N2}_{1}(q)
	&=
	\frac{2\aqprod{q^3}{q^3}{\infty}\aqprod{q^6}{q^6}{\infty}}
		{\aqprod{q}{q}{\infty}}
	,\\
	\overline{N2}_{2}(q)
	&=
	\frac{4\aqprod{q^6}{q^6}{\infty}^4}
		{\aqprod{q^2}{q^2}{\infty}\aqprod{q^3}{q^3}{\infty}^2}
	+
	\frac{6q\aqprod{-q^3}{q^3}{\infty}}{\aqprod{q^3}{q^3}{\infty}}
	\sum_{n=-\infty}^\infty \frac{(-1)^n q^{3n^2+6n}}{1-q^{6n+2}}
.
\end{align}
\end{theorem}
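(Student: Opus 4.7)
The plan is to substitute $z = \zeta_3$ into the $M_2$-rank generating function (\ref{RankDef}), collapse the right-hand side using the rank symmetry, and then extract the three pieces of the $q$-expansion in residues modulo $3$ using the $M_2$-rank difference formulas of Lovejoy and Osburn from \cite{LO3}.

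First I would use $\zeta_3 + \zeta_3^{-1} = -1$, which gives $(1-\zeta_3)(1-\zeta_3^{-1}) = 3$ and $(1-\zeta_3 q^{2n})(1-\zeta_3^{-1}q^{2n}) = 1 + q^{2n} + q^{4n}$, to simplify the right-hand side of (\ref{RankDef}) at $z = \zeta_3$ into a Hecke/Lambert-style series. On the coefficient side, the rank symmetry $\overline{N2}(m,n) = \overline{N2}(-m,n)$, which follows from the $z \leftrightarrow z^{-1}$ invariance of (\ref{RankDef}), combined with $1 + \zeta_3 + \zeta_3^2 = 0$, collapses $\sum_m \overline{N2}(m,n)\zeta_3^m$ to $\overline{N2}(0,3,n) - \overline{N2}(1,3,n)$. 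Splitting the $q$-expansion according to $n \pmod{3}$ puts the problem exactly in the form treated by Lovejoy and Osburn: they provide closed-form expressions for $\sum_n \Parans{\overline{N2}(0,3,3n+i) - \overline{N2}(1,3,3n+i)}q^n$ for each $i \in \CBrackets{0,1,2}$. Matching $i=0$ and $i=1$ with \cite{LO3} immediately yields $\overline{N2}_0(q)$ and $\overline{N2}_1(q)$ as stated.

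The main obstacle will be the $i = 2$ piece, which mixes a pure theta quotient with a Lambert-type sum. To reach the stated form
\begin{align*}
\overline{N2}_2(q) = \frac{4\aqprod{q^6}{q^6}{\infty}^4}{\aqprod{q^2}{q^2}{\infty}\aqprod{q^3}{q^3}{\infty}^2} + \frac{6q\aqprod{-q^3}{q^3}{\infty}}{\aqprod{q^3}{q^3}{\infty}}\sum_{n=-\infty}^\infty \frac{(-1)^n q^{3n^2+6n}}{1-q^{6n+2}},
\end{align*}
one typically has to rewrite the raw output of \cite{LO3} using the Jacobi triple product to consolidate theta quotients, and to manipulate the Lambert series (splitting by sign or recombining terms symmetric under $n \mapsto -n-1$) in order to normalize the denominator to $1 - q^{6n+2}$ and collect the exponents $3n^2 + 6n$. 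Once these identities are carried out, the equality is verified by comparing infinite products and Lambert sums on the nose.
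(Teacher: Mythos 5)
Your plan is essentially identical to the paper's (implicit) proof: the paper simply derives this theorem from the $M_2$-rank difference formulas of Lovejoy and Osburn in \cite{LO3}, after the same reduction you describe, namely using $\overline{N2}(m,n)=\overline{N2}(-m,n)$ and $1+\zeta_3+\zeta_3^2=0$ to collapse the coefficient of $q^n$ to $\overline{N2}(0,3,n)-\overline{N2}(1,3,n)$ and then reading off each residue class modulo $3$. The remaining work you flag for the $i=2$ piece is just the routine normalization of the theta and Lambert components to the stated form, so the proposal is correct and matches the paper's route.
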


\begin{theorem}\label{M2CrankDissection}
\begin{align}
	\frac{\aqprod{-q}{q}{\infty}\aqprod{q^2}{q^2}{\infty}}
		{\aqprod{q}{q^2}{\infty}\aqprod{\zeta_3q^2,\zeta_3^{-1}q^2}{q^2}{\infty}}
	&=
	\sum_{n=0}^\infty\sum_{m=-\infty}^\infty
		\overline{M2}(m,n)\zeta_3^mq^n
	= \overline{M2}_{0}(q^3) 
		+ q\overline{M2}_{1}(q^3) 
		+ q^2\overline{M2}_{2}(q^3)
,
\end{align}
where
\begin{align}
	\overline{M2}_{0}(q)
	&=
	\frac{\aqprod{-q}{q}{\infty}\aqprod{q^3}{q^3}{\infty}^2}
		{\aqprod{q}{q}{\infty}\aqprod{-q^3}{q^3}{\infty}^2}
	,
	\\
	\overline{M2}_{1}(q)
	&=
	2\frac{\aqprod{q^3}{q^3}{\infty}\aqprod{q^6}{q^6}{\infty}}
		{\aqprod{q}{q}{\infty}}
	,\\
	\overline{M2}_{2}(q)
	&=
	\frac{\aqprod{q^6}{q^6}{\infty}^4}
		{\aqprod{q^2}{q^2}{\infty}\aqprod{q^3}{q^3}{\infty}^2}
.
\end{align}
\end{theorem}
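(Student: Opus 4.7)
The plan is to first reduce the left-hand side to the form $\psi(q)^2 / \aqprod{q^6}{q^6}{\infty}$, where $\psi$ is Ramanujan's theta function, and then apply the classical 3-dissection of $\psi$.

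For the reduction, since $(1-\zeta_3 x)(1-\zeta_3^{-1}x) = 1 + x + x^2 = (1-x^3)/(1-x)$, taking the product over $x = q^{2n}$ gives $\aqprod{\zeta_3 q^2, \zeta_3^{-1}q^2}{q^2}{\infty} = \aqprod{q^6}{q^6}{\infty}/\aqprod{q^2}{q^2}{\infty}$. Combined with the standard relations $\aqprod{-q}{q}{\infty} = \aqprod{q^2}{q^2}{\infty}/\aqprod{q}{q}{\infty}$ and $\aqprod{q}{q^2}{\infty} = \aqprod{q}{q}{\infty}/\aqprod{q^2}{q^2}{\infty}$, the left-hand side collapses to $\psi(q)^2/\aqprod{q^6}{q^6}{\infty}$, where $\psi(q) := \aqprod{q^2}{q^2}{\infty}^2/\aqprod{q}{q}{\infty} = \sum_{n\ge 0} q^{n(n+1)/2}$.

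Next I would invoke the classical identity $\psi(q) = f(q^3, q^6) + q\,\psi(q^9)$, where $f(a,b) = \aqprod{-a,-b,ab}{ab}{\infty}$ is Ramanujan's general theta function (and the Jacobi triple product gives the stated product form). This follows directly by splitting $\sum_{n\ge 0} q^{n(n+1)/2}$ according to the residue of $n$ modulo $3$ and observing that no triangular number lies in the class $2 \pmod 3$. Squaring then produces the $3$-dissection
\begin{align*}
\psi(q)^2 = f(q^3,q^6)^2 + 2q\,f(q^3,q^6)\,\psi(q^9) + q^2 \psi(q^9)^2.
\end{align*}
Since $\aqprod{q^6}{q^6}{\infty}$ is a function of $q^3$, dividing preserves the dissection, and I may read off
\begin{align*}
\overline{M2}_0(q^3) &= \frac{f(q^3,q^6)^2}{\aqprod{q^6}{q^6}{\infty}}, \qquad
\overline{M2}_1(q^3) = \frac{2\,f(q^3,q^6)\,\psi(q^9)}{\aqprod{q^6}{q^6}{\infty}}, \qquad
\overline{M2}_2(q^3) = \frac{\psi(q^9)^2}{\aqprod{q^6}{q^6}{\infty}}.
\end{align*}

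The remaining step is to verify that each expression equals the stated eta-quotient. Writing $f(q^3,q^6) = \aqprod{-q^3,-q^6,q^9}{q^9}{\infty}$ and $\psi(q^9) = \aqprod{q^{18}}{q^{18}}{\infty}^2/\aqprod{q^9}{q^9}{\infty}$, and repeatedly applying $\aqprod{q^{2k}}{q^{2k}}{\infty} = \aqprod{q^k}{q^k}{\infty}\aqprod{-q^k}{q^k}{\infty}$ for $k=3$ and $k=9$, each of the three quotients simplifies directly to the claimed form. I expect this last step to be the main source of effort, purely as product bookkeeping; the conceptual content is exhausted by the reduction to $\psi^2$ and Ramanujan's 3-dissection, and no further $q$-series machinery is needed.
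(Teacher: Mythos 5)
Your proposal is correct and follows essentially the same route as the paper: both reduce the left side to $\psi(q)^2/\aqprod{q^6}{q^6}{\infty}$ with $\psi(q)=\aqprod{q^2}{q^2}{\infty}/\aqprod{q}{q^2}{\infty}=\sum_{n\ge 0}q^{n(n+1)/2}$, obtain the $3$-dissection $\psi(q)=f(q^3,q^6)+q\,\psi(q^9)$ by splitting the (bilateral) triangular-number series modulo $3$ and applying the Jacobi triple product, and then square, divide by $\aqprod{q^6}{q^6}{\infty}$, and simplify the resulting products. No substantive difference.
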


We see Theorem \ref{SptBarCrankDissection} follows from Theorems
\ref{SptBar2RankCrank}, \ref{DissectionForRank}, and \ref{M2CrankDissection}, noting
$\frac{1}{(1-\zeta_3)(1-\zeta_3^{-1})}=\frac{1}{3}$.
We give the proofs of Theorems \ref{SptBar2RankCrank} and 
\ref{M2CrankDissection} in the next section. In Section 3 we give brief
combinatorial interpretations of the coefficients $N_{\SB}(m,n)$, in 
particular they are non-negative, and in Section 4 we end with a few remarks.

\section{The Proofs}

\begin{proof}[Proof of Theorem \ref{SptBar2RankCrank}]

We recall a pair of sequences  $(\alpha_n,\beta_n)$,
is a Bailey pair relative to $(a,q)$ if
\begin{align}
	\beta_n &= \sum_{r=0}^n \frac{\alpha_r}
		{\aqprod{q}{q}{n-r}\aqprod{aq}{q}{n+r}}
.
\end{align}
A limiting case of Bailey's Lemma gives for a Bailey pair
$(\alpha_n,\beta_n)$ that
\begin{align}
	\sum_{n=0}^\infty
	\aqprod{\rho_1,\rho_2}{q}{n} \Parans{\frac{aq}{\rho_1\rho_2}}^n  \beta_n
	&=
	\frac{\aqprod{aq/\rho_1,aq/\rho_2}{q}{\infty}}
		{\aqprod{aq,aq/\rho_1\rho_2}{q}{\infty}}
	\sum_{n=0}^\infty
	\frac{\aqprod{\rho_1,\rho_2}{q}{n} (\frac{aq}{\rho_1\rho_2})^n \alpha_n}
		{\aqprod{aq/\rho_1,aq/\rho_2}{q}{n}}
.
\end{align}

As in \cite{CJS1} the Bailey pair connecting the $M_2$-rank of
an overpartition and the residual crank is
\begin{align}
	\alpha_n &= \PieceTwo{1}{(-1)^n2q^{n^2}}{n=0}{n\ge 1}
	,\\
	\beta_n &= \frac{\aqprod{q}{q^2}{n}^2}{\aqprod{q^2}{q^2}{2n}}
. 
\end{align}
This is a Bailey pair with respect to $(1,q^2)$.

We note that
\begin{align}
	\frac{\aqprod{q^2}{q^2}{\infty}}{\aqprod{z,z^{-1}}{q^2}{\infty}\aqprod{q}{q^2}{\infty}^2}
	\cdot
	\frac{\aqprod{zq^2,z^{-1}q^2}{q^2}{\infty}}{\aqprod{q^2}{q^2}{\infty}^2}
	&=
	\frac{\aqprod{q^2}{q^2}{\infty}}{(1-z)(1-z^{-1})\aqprod{q}{q}{\infty}^2}
	=
	\frac{\aqprod{-q}{q}{\infty}}{(1-z)(1-z^{-1})\aqprod{q}{q}{\infty}}
	.
\end{align}
With this Bailey pair we have
\begin{align}
	\SB(z,q)
	&=
	\sum_{n=1}^\infty 
	\frac{q^{2n}\aqprod{-q^{2n+1}}{q}{\infty}\aqprod{q^{2n+1}}{q}{\infty}}
		{\aqprod{zq^{2n},z^{-1}q^{2n}}{q^2}{\infty}\aqprod{q^{2n+1}}{q^2}{\infty}^2}
	\nonumber\\
	&=
	\sum_{n=1}^\infty 
	\frac{q^{2n}\aqprod{q^{4n+2}}{q^2}{\infty}}
		{\aqprod{zq^{2n},z^{-1}q^{2n}}{q^2}{\infty}\aqprod{q^{2n+1}}{q^2}{\infty}^2}
	\nonumber\\
	&=
	\frac{\aqprod{q^2}{q^2}{\infty}}{\aqprod{z,z^{-1}}{q^2}{\infty}\aqprod{q}{q^2}{\infty}^2}
	\sum_{n=1}^\infty 
		q^{2n}\aqprod{z,z^{-1}}{q^2}{n}\beta_n
	\nonumber\\
	&=
	\frac{\aqprod{q^2}{q^2}{\infty}}{\aqprod{z,z^{-1}}{q^2}{\infty}\aqprod{q}{q^2}{\infty}^2}
	\sum_{n=0}^\infty 
		q^{2n}\aqprod{z,z^{-1}}{q^2}{n}\beta_n
	-
	\frac{\aqprod{q^2}{q^2}{\infty}}{\aqprod{z,z^{-1}}{q^2}{\infty}\aqprod{q}{q^2}{\infty}^2}
	\nonumber\\
	&=
	\frac{\aqprod{-q}{q}{\infty}}{(1-z)(1-z^{-1})\aqprod{q}{q}{\infty}}
	\sum_{n=0}^\infty 
	\frac{q^{2n}\aqprod{z,z^{-1}}{q^2}{n}  \alpha_n}
		{\aqprod{zq^2,z^{-1}q^2}{q^2}{n}}
	-
	\frac{\aqprod{q^2}{q^2}{\infty}}{\aqprod{z,z^{-1}}{q^2}{\infty}\aqprod{q}{q^2}{\infty}^2}
	\nonumber\\
	&=
	\frac{\aqprod{-q}{q}{\infty}}{(1-z)(1-z^{-1})\aqprod{q}{q}{\infty}}
	\sum_{n=0}^\infty 
	\frac{q^{2n}(1-z)(1-z^{-1})\alpha_n}
		{(1-zq^{2n})(1-z^{-1}q^{2n})}
	-
	\frac{\aqprod{-q}{q^2}{\infty}\aqprod{q^2}{q^2}{\infty}}
		{\aqprod{q}{q^2}{\infty}\aqprod{z,z^{-1}}{q^2}{\infty}}
	\nonumber\\
	&=
	\frac{\aqprod{-q}{q}{\infty}}{(1-z)(1-z^{-1})\aqprod{q}{q}{\infty}}
	\left(
		1+
		2\sum_{n=1}^\infty 
		\frac{(1-z)(1-z^{-1})(-1)^nq^{n^2+2n}}
			{(1-zq^{2n})(1-z^{-1}q^{2n})}
	\right)
	-
	\frac{\aqprod{-q}{q^2}{\infty}\aqprod{q^2}{q^2}{\infty}}
		{\aqprod{q}{q^2}{\infty}\aqprod{z,z^{-1}}{q^2}{\infty}}
.
\end{align}
By equations (\ref{RankDef}) and (\ref{CrankDef}) we then have
\begin{align}
	\SB(z,q)
	&=
	\frac{1}{(1-z)(1-z^{-1})}  
	\sum_{n=0}^\infty\sum_{m=-\infty}^\infty 
		\Parans{\overline{N2}(m,n)-\overline{M2}(m,n)}z^mq^n
.
\end{align}
\end{proof}

\begin{proof}[Proof of Theorem \ref{M2CrankDissection}]
We begin by noting
\begin{align}
	\frac{\aqprod{-q}{q}{\infty}\aqprod{q^2}{q^2}{\infty}}
		{\aqprod{q}{q^2}{\infty}\aqprod{\zeta_3q^2,\zeta_3^{-1}q^2}{q^2}{\infty}}
	&=
	\frac{\aqprod{q^2}{q^2}{\infty}^2}{\aqprod{q}{q^2}{\infty}^2\aqprod{q^6}{q^6}{\infty}}
.
\end{align}
By Gauss we have
\begin{align}
	\frac{\aqprod{q^2}{q^2}{\infty}}{\aqprod{q}{q^2}{\infty}}
	&=
	\sum_{n=0}^\infty q^{n(n+1)/2}
	=
	\frac{1}{2}	\sum_{n=-\infty}^\infty q^{n(n+1)/2}
.
\end{align}

By the Jacobi Triple Product Identity we then have
\begin{align}
	\frac{\aqprod{q^2}{q^2}{\infty}}{\aqprod{q}{q^2}{\infty}}
	&=
	\frac{1}{2}\sum_{k=0}^2\sum_{n=-\infty}^\infty q^{(3n+k)(3n+k+1)/2}	
	\nonumber\\
	&=
	\frac{1}{2}\sum_{n=-\infty}^\infty q^{(9n^2+3n)/2}	
	+\frac{1}{2}\sum_{n=-\infty}^\infty q^{(9n^2+9n)/2}	
	+\frac{1}{2}\sum_{n=-\infty}^\infty q^{(9n^2+15n)/2}	
	\nonumber\\
	&=
	\frac{1}{2}\aqprod{-q^6,-q^3,q^9}{q^9}{\infty}
	+\frac{1}{2}q\aqprod{-1,-q^9,q^9}{q^9}{\infty}
	+\frac{1}{2}\aqprod{-q^{-3},-q^{12},q^9}{q^9}{\infty}
	\nonumber\\
	&=
	\aqprod{-q^6,-q^3,q^9}{q^9}{\infty}
	+q\aqprod{-q^9,-q^9,q^9}{q^9}{\infty}
.	
\end{align}

Using the above to expand $\frac{\aqprod{q^2}{q^2}{\infty}^2}{\aqprod{q}{q^2}{\infty}^2}$
and dividing by $\aqprod{q^6}{q^6}{\infty}$ then gives
\begin{align}
	\overline{M2}_{0}(q)
	&=
	\frac{\aqprod{-q,-q^2,q^3}{q^3}{\infty}^2}{\aqprod{q^2}{q^2}{\infty}}
	,
	\\
	\overline{M2}_{1}(q)
	&=
	2\frac{\aqprod{-q,-q^2,-q^3,-q^3,q^3,q^3}{q^3}{\infty}}{\aqprod{q^2}{q^2}{\infty}}
	\\
	\overline{M2}_{2}(q)
	&=
	\frac{\aqprod{-q^3,-q^3,q^3}{q^3}{\infty}^2}{\aqprod{q^2}{q^2}{\infty}}
.
\end{align}
However these products easily reduce to those in the statement of the Theorem
\ref{M2CrankDissection}.
\end{proof}

\section{Combinatorial Interpretations}

We see $N_{\SB}(m,n)$ can be interpreted in terms of vector partitions.
We let $\overline{V} = \mathcal{D}\times\mathcal{P}\times\mathcal{P}\times\mathcal{D}$, 
where $\mathcal{P}$ denotes the set of all partitions and $\mathcal{D}$ 
denotes the set of all partitions into distinct parts. For a partition 
$\pi$ we let $s(\pi)$ denote the smallest part of $\pi$ 
(with the convention that the empty partition has smallest part $\infty$),
$\#(\pi)$ the number of parts in $\pi$,
$\#(\pi_e)$ the number of even parts in $\pi$,
and $|\pi|$ the sum of the parts of $\pi$. 
For $\vec{\pi} = (\pi^1,\pi^2,\pi^3,\pi^4) \in\overline{V}$, we define the weight
$\omega(\vec{\pi}) = (-1)^{\#(\pi^1)-1 }$, the 
$\mbox{crank}(\vec{\pi}) = \#(\pi^2_e) - \#(\pi^3_e)$,
and the norm $|\vec{\pi}|=|\pi^1|+|\pi^2|+|\pi^3|+|\pi^4|$. We say $\vec{\pi}$ is 
a vector partition of $n$ if $|\vec{\pi}|=n$.

By writing the summands of $\SB(z,q)$ as
\begin{align}
	&q^{2n}\aqprod{q^{2n+1}}{q}{\infty}
	\cdot
	\frac{1}{\aqprod{zq^{2n}}{q^2}{\infty}\aqprod{q^{2n+1}}{q^2}{\infty}}
	\cdot
	\frac{1}{\aqprod{z^{-1}q^{2n}}{q^2}{\infty}\aqprod{q^{2n+1}}{q^2}{\infty}}
	\cdot
	\aqprod{q^{2n+1}}{q}{\infty}
,
\end{align}
we see $N_{\SB}(m,n)$ is the number of vector partitions
$(\pi^1,\pi^2,\pi^3,\pi^4)$ from $\overline{V}$ of $n$ with the additional constraints
that $\pi^1$ is non-empty,
$s(\pi^1)\le s(\pi^2)$, $s(\pi^1)\le s(\pi^3)$, $s(\pi^1)< s(\pi^4)$,
and $s(\pi^1)$ is even, but counted with the weight $\omega$.

However, this interpretation hides the fact that each $N_{\SB}(m,n)$ is
non-negative, as we are counting with a weight that may be negative. 
We can also interpret $N_{\SB}(m,n)$ in terms of partition pairs.
This interpretation makes the non-negativity clear. Using the $q$-binomial theorem 
we have
\begin{align}
	&\sum_{n=1}^\infty
	\frac{q^{2n}\aqprod{q^{4n+2}}{q^2}{\infty}}
		{\aqprod{zq^{2n},z^{-1}q^{2n}}{q^2}{\infty}\aqprod{q^{2n+1}}{q^2}{\infty}^2}
	\nonumber\\
	&=
	\sum_{n=1}^\infty
	\frac{q^{2n}}{\aqprod{zq^{2n}}{q^2}{\infty}\aqprod{q^{2n+1}}{q^2}{\infty}^2}
	\sum_{k=0}^\infty 
	\frac{\aqprod{zq^{2n+2}}{q^2}{k} z^{-k}q^{2nk}}
		{\aqprod{q^2}{q^2}{k}}
	\nonumber\\
	&=
	\sum_{n=1}^\infty
	\frac{q^{2n}}{\aqprod{zq^{2n}}{q^2}{\infty}\aqprod{q^{2n+1}}{q^2}{\infty}^2}
	+
	\sum_{n=1}^\infty\sum_{k=1}^\infty
	\frac{z^{-k}q^{2n+2nk}}
		{(1-zq^{2n})\aqprod{zq^{2n+2k+2}}{q^2}{\infty}\aqprod{q^2}{q^2}{k}\aqprod{q^{2n+1}}{q^2}{\infty}^2}
	\nonumber\\
	&=
	\sum_{n=1}^\infty
	\frac{q^{2n}}{\aqprod{zq^{2n}}{q^2}{\infty}\aqprod{q^{2n+1}}{q^2}{\infty}^2}
	\nonumber\\&\quad
		+
		\sum_{n=1}^\infty\sum_{k=1}^\infty
		\frac{q^{2n}\aqprod{q^2}{q^2}{n}}
			{(1-zq^{2n})\aqprod{q^2}{q^2}{n+k}\aqprod{zq^{2n+2k+2}}{q^2}{\infty}\aqprod{q^{2n+1}}{q^2}{\infty}}
		\cdot
		\frac{z^{-k}q^{2nk}\aqprod{q^2}{q^2}{n+k}}
			{\aqprod{q^2}{q^2}{k}\aqprod{q^2}{q^2}{n}\aqprod{q^{2n+1}}{q^2}{\infty}}
	\nonumber\\
	&=
	\sum_{n=1}^\infty
	\frac{q^{2n}}{\aqprod{zq^{2n}}{q^2}{\infty}\aqprod{q^{2n+1}}{q^2}{\infty}^2}
	\nonumber\\\label{EqPairCrankSeries}&\quad
		+
		\sum_{n=1}^\infty\sum_{k=1}^\infty
		\frac{q^{2n}}
			{(1-zq^{2n})\aqprod{q^{2n+2}}{q^2}{k}\aqprod{zq^{2n+2k+2}}{q^2}{\infty}
			\aqprod{q^{2n+1}}{q^2}{\infty}}
		\cdot
		\frac{z^{-k}q^{2nk}\aqprod{q^2}{q^2}{n+k}}
			{\aqprod{q^2}{q^2}{k}\aqprod{q^2}{q^2}{n}\aqprod{q^{2n+1}}{q^2}{\infty}}
.
\end{align}

We can now give the partition pair interpretation. We let $PP_2$ denote the set of
partition pairs $(\pi^1,\pi^2)$ such that $\pi^1$ is non-empty, $s(\pi^1)$ is 
even, $s(\pi^1)\le s(\pi^2)$, and the even parts of $\pi^2$
are at most $2s(\pi^1)$. For such a partition pair we 
let $k(\pi^1,\pi^2)$ denote the number of even parts of $\pi^1$ that are either 
the smallest part or are larger than $s(\pi_1)+2\#(\pi^2_e)$. We note when 
$\pi^2$ contains no even parts that $k(\pi_1,\pi_2)$ reduces to $\#(\pi^1_e)$. 
We define a crank on the elements of $PP_2$ by
\begin{align}
	c(\pi_1,\pi_2)
	&=	
		k(\pi^1,\pi^2) - \#(\pi^2_e) 	 - 1	
.
\end{align}
We claim $N_{\SB}(m,n)$ is also the number of partitions pairs of $n$ from
$PP_2$ with $c(\pi^1,\pi^2)=m$.

For this we note the first series in (\ref{EqPairCrankSeries}) gives the cases 
when $\pi^2$ has no
even parts. The second series in (\ref{EqPairCrankSeries}) gives the cases when $\pi^2$ has even
parts, since
 $\frac{q^{2nk}\aqprod{q^2}{q^2}{n+k}}{\aqprod{q^2}{q^2}{k}\aqprod{q^2}{q^2}{n}}$
is the generating function for partitions into even parts with exactly $k$ parts
and each part between $2n$ and $4n$ (inclusive).

It may be possible to define a bijection from these partition pairs to
marked overpartitions with smallest part even, and through that determine
a crank defined on marked overpartitions. However, we do not pursue that here.

\section{Remarks}
While $\SB(\zeta_3,q)$ can be used to prove
$\sptBar{2}{3n}\equiv 0 \pmod{3}$ 
and $\sptBar{2}{3n+1}\equiv 0 \pmod{3}$, we cannot use $\SB(\zeta_5,q)$ to prove
$\sptBar{2}{5n+3}\equiv 0 \pmod{5}$. In particular we find the coefficient
of $q^8$ in $\SB(\zeta_5,q)$ to be
$z^3+z^2+3z+5+3z^{-1}+z^{-2}+z^{-3}$. That is to say,
$N_{\SB}(0,5,8) = 5$,
$N_{\SB}(1,5,8) = 3$,
$N_{\SB}(2,5,8) = 2$,
$N_{\SB}(3,5,8) = 2$, and
$N_{\SB}(4,5,8) = 3$.

However $\sptBar{2}{5n+3}\equiv 0 \pmod{5}$ does follow by considering
$\overline{\mbox{S}}_2(\zeta_5,q)$.
This can be compared with the rank of a partition explaining the congruences for
$p(5n+4)$ and $p(7n+5)$ but not $p(11n+6)$, whereas the crank of an partition
does explain all three.

\bibliographystyle{abbrv}
\bibliography{alternateSptBar2Mod3Ref}

\end{document}